\documentclass{amsart}
\usepackage{enumerate}
\usepackage{amssymb}
\usepackage{mathtools}
\usepackage{mathrsfs}
\usepackage{color}
\usepackage[
colorlinks=true,
linkcolor=blue,
citecolor=blue,
urlcolor=blue]{hyperref}

\numberwithin{equation}{section}
\newcommand{\E}{\mathbb{E}}
\newcommand{\M}{\mathcal{M}}
\newcommand{\cn}{\Omega_{n}}
\newcommand{\MM}{\mathbb{M}}
\newcommand{\x}{\mathbf{x}}
\newcommand{\y}{\mathbf{y}}

\theoremstyle{plain}
\newtheorem{theorem}{Theorem}[section]

\newtheorem{corollary}[theorem]{Corollary}
\theoremstyle{remark}
\newtheorem{remark}[theorem]{Remark}

\begin{document}
\title{Talagrand type for noncommutative $L_1$ spaces}
	\author[S.Luo]{Sijie Luo}
	\address{
		School of Mathematics and Statistics\\
		Central South University\\
		Changsha 410083\\
		China}
	
	\email{sijieluo@csu.edu.cn}

	\author[Y.Wang]{Yusen Wang${}^{\ast}$}
	\address{
		School of Mathematics and Statistics\\
		Central South University\\
		Changsha 410083\\
		China}
	
	\email{yusenwang24@csu.edu.cn}

	\author[D.Zhou]{Dejian Zhou}
	\address{
		School of Mathematics and Statistics\\
		Central South University\\
		Changsha 410083\\
		China}
	
	\email{zhoudejian@csu.edu.cn}

\thanks{$\ast$ the corresponding author.}
	\maketitle
	
	\begin{abstract}
		Let $(\mathcal{M},\tau)$ be a semi-finite von Neumann algebra. We prove that $L_1(\mathcal{M})$ has Talagrand type $(1,\psi_{1,1})$, where $\psi_{1,1}(t)=t/\log(e+t)$. As an consequence, the Schatten trace class $S_1$ has Talagrand type $(1,\psi_{1,1})$, thus settled an open problem of Cordero-Erausquin and Eskenazis \cite[Question 2]{CE2023}. Our proof is based on the noncommutative Mazur maps initiated by Ricard \cite{R2015}.
	\end{abstract}
	
	\section{introduction}
	
	Motivated by the developments of nonlinear geometry of Banach spaces, where the classification of Banach spaces is an significant researching direction, the study of vector-valued functional inequalities form an essential theme in this area. To put the development of this topic in context, we quote two well-known results. First, a surjective isometry between two Banach spaces which takes $0$ to $0$ is necessarily linear which is given by Mazur and Ulam \cite{MU1932}. It means that the structure of a metric space is completely determined its linear structure. The second is according to Kadec \cite{Ka1967}, is that all separable infinite-dimensional Banach spaces are homeomorphic with each other.  The second says that its structure as a topological space, which is very simple, contains no information whatsoever on its linear structure. Based on these consequences, one of the basic questions in this area is whether linear properties of Banach spaces can be characterized in terms of their metric structure. Enflo's work on uniform homeomorphisms \cite{Enflo1969, Enflo1978} introduces an influential tool we call the Enflo type today and asked whether Rdemacher type implies with it the same $p$. Specifically, a Banach space has Enflo type expresses that the length of diagonals can be controlled by the sum of lengths of vertices. In the probabilistic theory of Banach spaces another essential vector-valued inequality is established by Pisier \cite{Pi1986} on the discrete cube and denoted by Pisier's inequality which is closely related to what we mentioned above. Pisier proved that Rademacher type $p$ implies Enflo type $r$ for every $1\leq r<p$ by it. This inequality given by Pisier in 1986 has coefficients with a logarithmic dependence on the dimension. Then in 2020, Hyt\"onen, Tuomas and Naor, Assaf \cite{HN2013} improved it to a universal constant. Ivanisvili, van Handel and Volberg \cite{IVHV2020} resolved Enflo that it is equivalent to Rademacher type by using the dimension-free Pisier's inequality. The question of whether these two notions coincide has been an important motivation for the study of vector-valued Poincar\'e inequalities. In particular, their results show that the discrete Poincar\'e inequality with $L_2$ norms holds for Banach-space-valued functions when the target space has Rademacher type $2$.
	
%A related development comes from the analysis of Boolean functions. The theorem of Kahn, Kalai and Linial \cite{KKL1988} gives a logarithmic improvement over the lower bound on the largest coordinate influence obtained from the Poincar\'e inequality.  Ivanisvili and Stone \cite{IS2024} proved a vector-valued KKL inequality for every Banach space of Rademacher type $2$. Their result further illustrates the connection between the geometry of the target space and logarithmic refinements of functional inequalities on the discrete cube.

Talagrand's influence inequality \cite{Ta1994}, also known as the $L_1$--$L_2$ inequality, refines the scalar Poincar\'e inequality by introducing a logarithmic denominator for each coordinate derivative. The improvement depends on the ratio of the $L_2$-norm to the $L_1$-norm of that derivative. In particular, Talagrand's inequality implies the Kahn--Kalai--Linial theorem. This leads naturally to the question of whether the same logarithmic improvement holds for Banach-space-valued functions. Cordero-Erausquin and Eskenazis \cite{CE2023} obtained a vector-valued extension under the assumption of Rademacher type $2$, with an additional doubly logarithmic factor, and removed this loss under the stronger assumption of martingale type $2$. They also proved stronger formulations in terms of Orlicz norms. The inequality of Ivanisvili and Stone has a common logarithmic denominator and does not directly yield Talagrand's inequality with a separate denominator for each coordinate.

To study the corresponding inequalities in metric spaces, Cordero-Erausquin and Eskenazis introduced Talagrand type. Their result \cite[Theorem~42]{CE2023} showed that scale-valued $L_1(\mu)$ space has Talagrand type $(1,\psi_{1,1})$, where $\psi_{1,1}(t)=t/\log(e+t)$. Their proof uses Schoenberg's theorem to construct an isometry from $(\ell_1, \| \cdot \|_{\ell_1}$ to $\left(\ell_2(\ell_2), \| \cdot \|^2_{\ell_2(\ell_2)}\right)$. This argument also used the metric structure of classical $L_1$ spaces. They therefore asked whether the Schatten trace class $S_1$ has the same Talagrand type \cite[Question~2]{CE2023}. Since $S_1$ contains an isometric copy of $\ell_1$, the results for spaces of nontrivial Rademacher type do not directly answer this question.

In this paper, we prove the corresponding inequality for noncommutative $L_1$ spaces associated with arbitrary semifinite tracial von Neumann algebras. In particular, it gives an affirmative answer to Question~2 of \cite{CE2023}.

\begin{theorem}\label{main}
Let $(\M,\tau)$ be a von Neumann algebra equipped with a faithful normal semifinite trace. There exists a universal constant $C<\infty$ such that, for every $n\in\mathbb N$ and every $f:\cn\to L_1(\M,\tau)$,
\begin{equation}\label{eqmain}
\E_{\varepsilon,\delta}
\|f(\varepsilon)-f(\delta)\|_{L_1(\M)}
\leq C\sum_{j=1}^n
\bigl\|\,\|\partial_j f\|_{L_1(\M)}\,\bigr\|_{\psi_{1,1}}.
\end{equation}
Consequently, $L_1(\M,\tau)$ has Talagrand type $(1,\psi_{1,1})$, with a constant independent of the algebra. In particular, the Schatten trace class $S_1$ has Talagrand type $(1,\psi_{1,1})$.
\end{theorem}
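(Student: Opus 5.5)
The plan is to transfer the problem from $L_1(\M)$ to a Hilbert space using the noncommutative Mazur map $M(x)=u|x|^{1/2}$, where $x=u|x|$ is the polar decomposition. This map sends $L_1(\M)$ into $L_2(\M)$ with $\|M(x)\|_2^2=\|x\|_1$. The key input is Ricard's H\"older estimates for Mazur maps \cite{R2015}. These give two-sided comparisons of the form
\begin{equation*}
c\,\|x-y\|_1\le \|M(x)-M(y)\|_2\,\bigl(\|x\|_1+\|y\|_1\bigr)^{1/2},
\qquad
\|M(x)-M(y)\|_2^2\le C\|x-y\|_1,
\end{equation*}
with universal constants. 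The second (square-root H\"older) bound is the noncommutative replacement for the Schoenberg embedding that Cordero-Erausquin and Eskenazis use. It says that $(L_1(\M),\|\cdot\|_1)$ is, up to constants, controlled by $\|\cdot\|_2^2$ on the image of the Mazur map.

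\textbf{Step 1 (reduction to $L_2$).} Set $g=M\circ f:\cn\to L_2(\M)$. By the upper H\"older bound, pointwise $\|\partial_j g\|_{2}^2\lesssim \|\partial_j f\|_1$. I will need to check the normalization of $\partial_j$ here: it is a half-difference, so this is fine up to constants. For the left-hand side of \eqref{eqmain}, the first inequality alone is not directly additive. So I will instead use an intermediate quantity: I aim to show
\[
\E_{\varepsilon,\delta}\|f(\varepsilon)-f(\delta)\|_1\lesssim \E\|g(\varepsilon)-g(\delta)\|_2^2+\text{(controlled error)}.
\]
If this fails, the fallback is a cleaner route: a genuine embedding of the metric space $(L_1(\M),\|\cdot\|_1^{1/2})$ into Hilbert space. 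I would not claim such an embedding exists in general, since whether $S_1$ is of negative type is delicate. I therefore prefer a different decomposition. Since the trace-class $L_1$ norm of a difference is controlled by $2\times2$ block selfadjoint tricks, I first reduce to selfadjoint $f$. This uses $x\mapsto\begin{pmatrix}0&x\\x^*&0\end{pmatrix}$ into $L_1(M_2(\M))$, which doubles norms. Then I write $x=x_+-x_-$ and apply the Mazur map to each positive part. For positive $a,b$, the Powers--St{\o}rmer inequality gives $\|a^{1/2}-b^{1/2}\|_2^2\le\|a-b\|_1$. In the other direction, $\|a-b\|_1\le \|a^{1/2}-b^{1/2}\|_2\,\|a^{1/2}+b^{1/2}\|_2$ follows from H\"older after writing $a-b=\tfrac12[(a^{1/2}-b^{1/2})(a^{1/2}+b^{1/2})+(a^{1/2}+b^{1/2})(a^{1/2}-b^{1/2})]$.

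\textbf{Step 2 (Hilbertian Talagrand with Cauchy--Schwarz).} By Cauchy--Schwarz,
\[
\E\|f(\varepsilon)-f(\delta)\|_1\le \bigl(\E\|h(\varepsilon)-h(\delta)\|_2^2\bigr)^{1/2}\bigl(\E\|h(\varepsilon)+h(\delta)\|_2^2\bigr)^{1/2},
\]
where $h$ is the square-root map. The plan is then to apply the vector-valued Talagrand inequality for Hilbert space (martingale type $2$, \cite{CE2023}) to $h$, in its Orlicz form. This should give
\[
\E\|h(\varepsilon)-h(\delta)\|_2^2\lesssim \sum_j \bigl\|\|\partial_j h\|_2^2\bigr\|_{\psi_{1,1}}\lesssim \sum_j\bigl\|\|\partial_jf\|_1\bigr\|_{\psi_{1,1}},
\]
using Powers--St{\o}rmer pointwise and monotonicity of the Orlicz norm. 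This is exactly the form used in the commutative proof of \cite[Theorem~42]{CE2023}, with $\psi_{1,1}$ arising from $\psi_{2,2}$ composed with squaring.

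\textbf{Main obstacle.} The hardest point is the second Cauchy--Schwarz factor $\E\|h(\varepsilon)+h(\delta)\|_2^2\approx\E\|f\|_1$, which is not controlled by derivatives. To handle it, I would first exploit translation invariance: replace $f$ by $f-m$ for a suitable center $m$, such as a median, or $f(\delta_0)$ averaged. This does not change the left side or the $\partial_j f$. It does break positivity, however, so one must use positive and negative parts, together with the Mazur map estimate $\|M(x)-M(y)\|_2^2\lesssim\|x-y\|_1$ for non-positive operators (Ricard). The alternative I would try first mirrors the commutative proof. There, $\|x-y\|_1$ on $\ell_1$ is written exactly as a squared Hilbert distance via the layer-cake map $x\mapsto \mathbf 1_{\{t< x\}}$. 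The noncommutative analogue is $x\mapsto \chi_{(t,\infty)}(x)$ for selfadjoint $x$, which sends $x$ into $L_2(\M\otimes L_\infty(\mathbb R))$. For commuting operators this gives equality, and in general it gives the two-sided inequality $\|x-y\|_1\asymp\int\|\chi_{(t,\infty)}(x)-\chi_{(t,\infty)}(y)\|_2^2\,dt$ only up to constants. Proving the lower bound in that comparison uniformly over $\M$ is where I expect the real work to lie. With it in hand, applying the Hilbert-valued Talagrand inequality to the indicator-valued function finishes the proof, exactly as in \cite{CE2023}.
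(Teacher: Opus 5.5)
Your proposal does not yet contain a proof. The obstacle you flag, the factor $\E\|h(\varepsilon)+h(\delta)\|_2^2\approx\E\|f\|_1$, is left unresolved, and the ``controlled error'' of Step~1 is never specified.

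The missing idea is an absorption step, and you already have every ingredient for it. Drop positivity entirely: Ricard's estimates, including the lower one you wrote at the start, hold for arbitrary elements of $L_1(\M)$, so centering costs nothing. Put $\tilde f=f-\E f$ and $g=M\circ\tilde f$. Your lower Mazur estimate and Cauchy--Schwarz give
\[
L:=\E_{\varepsilon,\delta}\|f(\varepsilon)-f(\delta)\|_1=\E\|\tilde f(\varepsilon)-\tilde f(\delta)\|_1\le c^{-1}\bigl(\E\|g(\varepsilon)-g(\delta)\|_2^2\bigr)^{1/2}\bigl(2\,\E\|\tilde f\|_1\bigr)^{1/2}.
\]
By the triangle inequality, $\E\|\tilde f\|_1=\E_\varepsilon\|\E_\delta(f(\varepsilon)-f(\delta))\|_1\le L$. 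So the troublesome factor is controlled by the left-hand side itself, not by the derivatives. Dividing by $\sqrt L$ gives $L\le 2c^{-2}\E\|g(\varepsilon)-g(\delta)\|_2^2=4c^{-2}\E\|g-\E g\|_2^2$.

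From there the proof is finished by three facts: the Hilbert-valued Talagrand inequality applied to $g$, the pointwise bound $\|\partial_j g\|_2^2\lesssim\|\partial_j f\|_1$ from the upper H\"older estimate, and $\|(\cdot)^{1/2}\|_{L_\Phi}^2=\|\cdot\|_{L_\Lambda}\asymp\|\cdot\|_{\psi_{1,1}}$. This is exactly the paper's argument. Any center $m$ with $\E\|f-m\|_1\le L$ would work, for instance the mean or a well-chosen value $f(\delta_0)$. You named such centers but did not notice that this is all the second Cauchy--Schwarz factor needs.

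The detours you propose instead would fail as stated. Splitting into $x_\pm$ is unnecessary, and it is also delicate: $x\mapsto x_\pm$ is not Lipschitz on the selfadjoint part of $S_1$, so the derivatives of $f_\pm$ are not controlled by those of $f$. The layer-cake comparison is false. For rank-one projections $P,Q\in M_2(\mathbb C)$ at angle $\theta$,
\[
\|P-Q\|_1=2\sin\theta,\qquad \int_{\mathbb R}\|\chi_{(t,\infty)}(P)-\chi_{(t,\infty)}(Q)\|_2^2\,dt=\|P-Q\|_2^2=2\sin^2\theta,
\]
so $\|x-y\|_1$ is not bounded by any constant multiple of that integral. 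That route is therefore unavailable. In the noncommutative setting, the one-sided bound $\|x-y\|_1\lesssim\|M(x)-M(y)\|_2(\|x\|_1+\|y\|_1)^{1/2}$, combined with absorption after centering, is what replaces the Schoenberg embedding used in the commutative proof.
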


By Jensen's inequality, we also obtain the following centered formulation.
\begin{corollary}[Centered formulation]\label{cor:centered}
Under the assumptions of Theorem~\ref{main},
\[
\E\|f-\E f\|_{L_1(\M)}
\leq C\sum_{j=1}^n
\bigl\|\,\|\partial_j f\|_{L_1(\M)}\,\bigr\|_{\psi_{1,1}}.
\]
\end{corollary}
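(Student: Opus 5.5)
The corollary follows from Theorem~\ref{main} by one convexity step. The plan is to compare the centered quantity $\E\|f-\E f\|_{L_1(\M)}$ with the symmetric two-point quantity $\E_{\varepsilon,\delta}\|f(\varepsilon)-f(\delta)\|_{L_1(\M)}$ on the left-hand side of \eqref{eqmain}. Then \eqref{eqmain} can be invoked verbatim; the right-hand side is the same in both statements, so it needs no treatment.

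\emph{Step 1 (rewriting the mean).} Since $\cn=\{-1,1\}^n$ is finite and carries the uniform probability measure, $\E f=2^{-n}\sum_{\delta\in\cn}f(\delta)$ is a finite convex combination of elements of $L_1(\M,\tau)$. No Bochner-integrability issue arises. For each fixed $\varepsilon\in\cn$,
\[
f(\varepsilon)-\E f=\E_\delta\bigl(f(\varepsilon)-f(\delta)\bigr),
\]
where $\delta$ is an independent uniform point of $\cn$.

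\emph{Step 2 (Jensen, i.e.\ the triangle inequality).} The map $x\mapsto\|x\|_{L_1(\M)}$ is a norm on $L_1(\M,\tau)$, hence convex. Applying the triangle inequality to the finite average over $\delta$ gives, for each $\varepsilon$,
\[
\|f(\varepsilon)-\E f\|_{L_1(\M)}\le \E_\delta\|f(\varepsilon)-f(\delta)\|_{L_1(\M)}.
\]
This uses only the Banach-space structure of $L_1(\M,\tau)$, not anything noncommutative. Averaging over $\varepsilon$ yields
\[
\E\|f-\E f\|_{L_1(\M)}\le \E_{\varepsilon,\delta}\|f(\varepsilon)-f(\delta)\|_{L_1(\M)}.
\]

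\emph{Step 3 (conclusion).} Combining Step 2 with \eqref{eqmain} gives the claimed inequality, with the same universal constant $C$ as in Theorem~\ref{main}. The bound is again independent of $n$ and of $(\M,\tau)$.

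There is no real obstacle here: all the difficulty sits in Theorem~\ref{main} itself. The only point requiring a moment's care is that $\E f$ lies in $L_1(\M,\tau)$ and that Jensen's inequality applies. Both are immediate because the expectation is a finite average and the norm is convex.
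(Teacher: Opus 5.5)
Your proof is correct and is essentially the paper's argument: the paper obtains the corollary from \eqref{eqmain} by Jensen's inequality, i.e.\ writing $f(\varepsilon)-\E f=\E_\delta\bigl(f(\varepsilon)-f(\delta)\bigr)$ and using convexity of $\|\cdot\|_{L_1(\M)}$, which is the same step as \eqref{eq y} inside the proof of Theorem~\ref{main}. The constant $C$ therefore carries over unchanged.
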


Our proof combines Ricard's estimates for the noncommutative Mazur maps \cite{R2015} with the Hilbert-valued case of \cite[Theorem~7]{CE2023}. After centering the original function, we apply the Mazur map from $L_1(\M)$ to $L_2(\M)$. The estimate for the inverse map, together with the Cauchy--Schwarz inequality, bounds the average $L_1$ distance by the variance of the transformed function. We then apply the Hilbert-valued Talagrand inequality. Finally, the $1/2$-H\"older estimate for the forward Mazur map controls the discrete derivatives, and the comparison of the corresponding Luxemburg functionals gives the right-hand side of \eqref{eqmain}.

The paper is organized as follows. Section~2 introduces the notation and recalls the required facts about noncommutative $L_p$ spaces, Orlicz norms and Mazur maps. Section~3 establishes the Mazur map estimates needed in the proof and records the Hilbert-valued Talagrand inequality. Section~4 contains the proof of Theorem~\ref{main}.

	\section{Preliminaries}\label{pre}
	Let $\cn=\{-1,1\}^{n}$
	with the uniform probability measure $\sigma_n$. If $\varepsilon=(\varepsilon_1,\ldots,\varepsilon_n)\in\cn$, write
	\[
	\varepsilon^{(j)}=(\varepsilon_1,\ldots,\varepsilon_{j-1},-\varepsilon_j,\varepsilon_{j+1},\ldots,\varepsilon_n)
	\]
	and, for a function $f:\cn\to X$ with values in a vector space,
	\[
	\partial_{j} f(\varepsilon)=\frac{f(\varepsilon)-f(\varepsilon^{(j)})}{2}.
	\]
	In what follows, $\varepsilon$ and $\delta$ are assumed to be elements of $\cn$. We set $\sigma_{2n}=\sigma_n\otimes\sigma_n$ on $\cn\times\cn$. Expectations in both variables are taken with respect to this product measure.
	
	Let $(\M,\tau)$ be a von Neumann algebra equipped with a faithful normal semifinite trace. We use the standard tracial noncommutative spaces $L_p(\M,\tau), p \geq 1$, abbreviated to $L_p(\M)$. Their norms are denoted by $\|\cdot\|_{L_p(\M)}$. It must be mentioned that every $x\in L_p(\M)$ has a polar decomposition $x=u|x|, u\in\mathcal M$ in the algebra of $\tau$-measurable operators affiliated with $\M$. We use the shorthand notation $a\lesssim b$ and $a\gtrsim b$ to mean $a\leq Cb$ and $b\leq Ca$ for a constant $C$. Furthermore, $a \asymp b$ imply both $a \lesssim b$ and $a \gtrsim b$.

	\subsection{Orlicz norm}
	
	Recall that with a Young function $\phi$ (i.e. a convex function with $\lim_{x\to 0}\frac{\phi(x)}{x}=0$ and $\lim_{x\to\infty}\frac{\phi(x)}{x}=\infty$) and $E$ is a Banach space, the $\phi$-Orlicz norm $\| \cdot \|_{L_{\phi}}$ of $f: \cn \to E$ is defined by
	\begin{equation}\label{Orlicznorm}
		{\left\| f \right\|}_{L_{\phi}} := \inf \left\{ \lambda > 0: \int_{\cn} \phi\left( \frac{\|f(\varepsilon)\|}{\lambda} \right) d \sigma_{n}(\varepsilon) \leq 1 \right\}.
	\end{equation}
	We denote the norm by $\| \cdot \|_{{L_{p}(\log L)}^{-r}}$
	when $p\in(1,\infty)$, $r\in\mathbb R$, and $\phi$ is a Young
	function satisfying $\phi(t)=t^p\log^{-r}(e+t)$ for all sufficiently
	large $t$. The space ${L_p(\log L)}^{-r}$ is the complete space
	under the corresponding norm.
	
	Now we introduce a very important Young function $\Phi: [0,\infty) \to [0, \infty)$ :
	\[\Phi(t) = 2 \int_{0}^{t} \frac{s}{\log(e+s)} ds = 2 \int_{0}^{t} \psi_{1,1}(s) ds,\qquad t \geq 0.\]
	It is straightforward that $\Phi(t)$ is an increasing function. Since $\psi_{1,1}$ is increasing, 
	\begin{equation}
		\frac{s}{\log(e+t)} \leq \frac{s}{\log(e+s)} \leq \frac{t}{\log(e+t)}, \qquad 0 \leq s \leq t.
	\end{equation}
	Hence we can obtain
	\[\Phi(t) \asymp \frac{t^2}{\log(e+t)},\quad t\ge 0,\]
	and 
	\begin{equation}\label{normequvalience1}
		\| \cdot \|_{L_{\Phi}} \asymp \| \cdot \|_{{L_{2}(\log L)}^{-1}}.
	\end{equation}
	Having established the foregoing definitions and relations, we proceed to define the increasing function
	\[\Lambda(t) = \Phi({t}^{1/2}).\] 
	It is obvious that
	\[\Lambda(t)\asymp\frac{t}{\log(e+t)},\qquad t\geq0,\]
	%	 \[	\|\cdot\|_{L_{\Lambda}} \asymp \| \cdot \|_{\psi_{1,1}}\]
	%	and 
	then there holds the following relation between the norms induced by these two functions, for nonnegative scalar functions on $\cn$
	\begin{equation}\label{normequvalience2}
		\| \left( {\cdot}\right) ^{1/2} \|_{L_{\Phi}}^2 = \| \cdot \|_{L_{\Lambda}} \asymp \|\cdot \|_{\psi_{1,1}}
	\end{equation}
	which is readily observed from \eqref{Orlicznorm}. Although $\Lambda$ and $\psi_{1,1}$ are not Young functions, their associated norms are defined in the same form as Orlicz norms. The equivalence of the two norms on \eqref{normequvalience1} and \eqref{normequvalience2} will be used in the sequel without further mention. The constants in these comparisons are independent of $n$ and of the target Banach space; they depend only on the fixed functions defining the norms.

	\subsection{Mazur map}
	For $1\leq p,q<\infty$, the noncommutative Mazur map $\MM_{p,q}$ from $L_p(\M)$ to $L_q(\M)$ is defined by
	\[ \MM_{p,q}(x):=x|x|^{(p-q)/q}.\]
	Particularly, supposing $x=u|x|$ for $x \in L_{1}(\M)$ and $y \in L_{2}(\M)$, then we have
	\[{\MM}_{1,2}(x)=u|x|^{1/2},\qquad {\MM}_{2,1}(y)=y|y
	|.\]
	Moreover, this readily follows for every $x\in L_1(\M)$,
	\begin{equation}\label{mazur relations1}
		\MM_{2,1}(\MM_{1,2}(x))=x,
	\end{equation}
	
	\begin{equation}\label{mazur relations2}
		\|\MM_{1,2}(x)\|_{L_{2}(\M)}^2=\tau(|x|)=\|x\|_{L_{1}(\M)}.
	\end{equation}
	Ricard proved that ${\MM}_{p,q}$ is $\min \{p/q,1\}$-H\"older on the unit ball of $L_p(\M)$, with constants depending only on $p$ and $q$ (see \cite{R2015}). 
	
	\section{Related estimates and consequences}\label{esti}
	
	Firstly, for the left-hand side of the inequality \eqref{eqmain}, we will first carry out centering and then apply the Mazur map to perform the transformation. In our estimate of the left-hand side norm within the proof of Theorem \ref{main}, we first require an extension of a property of the Mazur map.
	\begin{theorem}[\cite{R2015}, page 1]\label{thm;maz}
		Let $\M$ be a von Neumann algebra, for $1\le p,q<\infty$, $\MM_{p,q}$ is $\min\left\{\frac{p}{q},1\right\}$-Hölder on the unit ball of $L_p(\mathcal{M})$.
	\end{theorem}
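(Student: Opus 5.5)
This is Ricard's theorem, so the plan follows Ricard's strategy in \cite{R2015}: reduce everything to a Hölder estimate for the \emph{operator} functions $t\mapsto t^{\theta}$ applied to positive operators, and then pass to general $x$ through polar decomposition and a $2\times2$ matrix trick. Write $\theta=p/q$ and $\alpha=\min\{\theta,1\}$. The aim is
\[
\|\MM_{p,q}(x)-\MM_{p,q}(y)\|_{L_q(\M)}\le C_{p,q}\|x-y\|_{L_p(\M)}^{\alpha}
\]
for $\|x\|_p,\|y\|_p\le1$. Since $\|\MM_{p,q}(x)\|_q^q=\|x\|_p^p$, the estimate is trivial when $\|x-y\|_p$ is bounded below. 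So only small differences matter.

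\emph{Step 1: the positive case.} Take $x,y\ge0$.
\begin{itemize}
\item If $\theta\le1$, the function $t\mapsto t^{\theta}$ is operator monotone and operator concave. The noncommutative Powers--Størmer/Birman--Koplienko--Solomyak type inequality then gives
\[
\|x^{\theta}-y^{\theta}\|_{p/\theta}\le\| |x-y|^{\theta}\|_{p/\theta}=\|x-y\|_p^{\theta}.
\]
This is proved first for $x\ge y$ via operator monotonicity and then in general using $x,y\le x\vee y$-type decompositions, or Ando's inequality for $\theta$-norm ideals.
\item If $\theta>1$, the target is Lipschitz on the unit ball. Write $\theta=k+s$ with $k$ an integer and $s\in[0,1)$. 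Telescope $x^k-y^k=\sum_i x^{i}(x-y)y^{k-1-i}$ and apply Hölder's inequality. For the fractional part, use a double operator integral. The divided difference of $t^{\theta}$ is a Schur multiplier that is bounded on $S_r$ for $1<r<\infty$, and this transfers to semifinite algebras by Potapov--Sukochev.
\end{itemize}

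\emph{Step 2: from positive to selfadjoint to general.} For selfadjoint $x$, the map $x\mapsto x|x|^{\theta-1}$ equals $x_+^{\theta}-x_-^{\theta}$. This is handled either directly by a double operator integral, or, when $\theta\le 1$, by an argument comparing $|x|,|y|$ and the spectral projections. For general $x,y$, use the embedding
\[
\tilde x=\begin{pmatrix}0&x\\x^*&0\end{pmatrix}\in L_p(M_2(\M)).
\]
This is selfadjoint with $|\tilde x|=|x^*|\oplus|x|$. Moreover $\tilde x|\tilde x|^{\theta-1}$ is the corresponding off-diagonal matrix of $\MM_{p,q}(x)$, and $\|\tilde x-\tilde y\|_p=2^{1/p}\|x-y\|_p$. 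Hence the selfadjoint estimate on the algebra $M_2(\M)$ gives the general one.

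\emph{Main obstacle.} The hard step is the selfadjoint case with $\theta<1$ in $L_1$-type endpoints, for instance $p=1$, $q=2$. There the relevant Schur multipliers are not bounded on $S_1$, so a uniform constant cannot come from Schur multiplier bounds. My first attempt would be to reduce, by $\|\cdot\|_{q}$ homogeneity and the Step 1 inequality, to the positive parts via $|x|$ and $|y|$. I would control $|x|-|y|$ by the known estimate $\||x|-|y|\|_p\lesssim\|x-y\|_p^{1/2}$-type bounds for $p=1$. Failing that, I would invoke Ricard's precise decomposition argument from \cite{R2015}, which in this paper is being cited as the source of the result.
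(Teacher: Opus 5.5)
The paper gives no proof of this statement; it is quoted from \cite{R2015}. Judged on its own, your sketch has a genuine gap, and it sits exactly where you suspect.

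\textbf{The main gap: the case $p=1<q$.} Your selfadjoint step needs $x\mapsto x_\pm=\frac12(|x|\pm x)$ to be Lipschitz on $L_p$. This is true whether you write $\MM_{p,q}(x)=x_+^\theta-x_-^\theta$ and apply the positive inequality to $x_\pm,y_\pm$, or compare $|x|$ with $|y|$.

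For $1<p<\infty$ this Lipschitz property is Davies' estimate for the absolute value, which also holds in semifinite algebras. With it, the positive-case bounds applied to $x_\pm,y_\pm$, followed by the $2\times2$ dilation, do finish the argument. Since $\theta>1$ forces $p=\theta q>1$, your outline can be completed in every case except $p=1<q$.

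In that remaining case it breaks down. The map $|\cdot|$ is not Lipschitz on $S_1$, and Davies' constant blows up as $p\to1$. Your fallback cannot repair this. By homogeneity, any modulus of continuity $\omega$ for $x\mapsto x_\pm$ on the unit ball of $S_1$ satisfies $\sup_\delta\omega(\delta)/\delta=\infty$. So the route through $x_\pm$ yields at best $\omega(\|x-y\|_1)^\theta$, never $C\|x-y\|_1^\theta$. With the $1/2$-H\"older bound you quote, you get exponent $\theta/2$, i.e.\ $1/4$ for $\MM_{1,2}$.

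Yet $\MM_{1,2}$ is precisely the case this paper uses in \eqref{A}. There the sharp exponent $1/2$ is what turns $\|\partial_j(\MM_{1,2}\circ\y)\|_{L_2(\M)}^2$ into $\|\partial_j f\|_{L_1(\M)}$.

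What is missing is an argument at $p=1$ that controls the sign part without ever estimating $x_+-y_+$ or $|x|-|y|$. For instance, let $e,f$ be the spectral projections of $x,y$ for $[0,\infty)$. Then $e(x-y)f=x_+z-zy_+$ and $e(x-y)f^\perp=x_+z'+z'y_-$, with contractions $z=ef$ and $z'=ef^\perp$. These compressions \emph{are} controlled by $\|x-y\|_1$, whereas $x_+-y_+$ is not. Supplying an estimate of this kind is the content of \cite{R2015}, so deferring to it leaves the theorem unproved in its essential case.

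\textbf{A smaller point: Step 1 for $\theta>1$ is not right as written.}
\begin{itemize}
\item Splitting $x^{k+s}-y^{k+s}=x^k(x^s-y^s)+(x^k-y^k)y^s$ and using Birman--Koplienko--Solomyak on $x^s-y^s$ gives only $\|x-y\|_p^{s}$, not a Lipschitz bound.
\item The divided difference $(\lambda^\theta-\mu^\theta)/(\lambda-\mu)$ is unbounded, so it is not itself a bounded Schur multiplier. You must factor it as $\phi(\lambda,\mu)(\lambda^{\theta-1}+\mu^{\theta-1})$, with $\phi$ homogeneous of degree $0$ and bounded on $L_p$. Then apply H\"older with exponents $p/(\theta-1)$ and $p$.
\end{itemize}
For the map $\MM_{2,1}$ needed in \eqref{B}, something simpler suffices. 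For $u,v\in L_2(\M)$, combine the identity $u|u|-v|v|=u(|u|-|v|)+(u-v)|v|$ with $\||u|-|v|\|_2\le\sqrt2\|u-v\|_2$.
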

	
	This theorem describes the behavior of the map on the unit ball and we shall extend it to the whole noncommutative space.
	
	\begin{corollary}\label{cor:mazur}
		There exist universal constants $A,B<\infty$ such that for any $x,y\in L_1(\mathcal M)$ and any $u,v\in L_2(\mathcal M)$
		\begin{align}
			\left\|\MM_{1,2}(x)-\MM_{1,2}(y)\right\|_{L_{2}(\M)}
			&\leq A\|x-y\|_{L_{1}(\M)}^{1/2},\label{A}\\
			\left\|\MM_{2,1}(u)-\MM_{2,1}(v)\right\|_{L_{1}(\M)}
			&\leq B\max\{\left\|u\right\|_{L_{2}(\M)},\left\|v\right\|_{L_{2}(\M)}\}\,\left\|u-v \right\|_{L_{2}(\M)}.\label{B}
		\end{align}
	\end{corollary}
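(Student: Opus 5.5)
The plan is to extend Ricard's unit-ball estimates in Theorem~\ref{thm;maz} to all of $L_1(\M)$ and $L_2(\M)$ by homogeneity. The key observation is that both Mazur maps are positively homogeneous. For $\lambda>0$, if $x=u|x|$ then $\lambda x=u|\lambda x|$, so
\[
\MM_{1,2}(\lambda x)=\lambda^{1/2}\MM_{1,2}(x),\qquad \MM_{2,1}(\lambda u)=\lambda^{2}\MM_{2,1}(u).
\]
I will verify this directly from the polar decomposition and the functional calculus of $|x|$.

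\emph{Proof of \eqref{A}.} Theorem~\ref{thm;maz} with $p=1$, $q=2$ gives a constant $A_0$ such that $\|\MM_{1,2}(x)-\MM_{1,2}(y)\|_{2}\le A_0\|x-y\|_1^{1/2}$ whenever $\|x\|_1,\|y\|_1\le1$. For general $x,y$, not both zero, set $\lambda=\max\{\|x\|_1,\|y\|_1\}$ and apply the estimate to $x/\lambda$ and $y/\lambda$. Scaling back multiplies the left side by $\lambda^{1/2}$ and the right side by $\lambda^{1/2}\cdot\lambda^{-1/2}\cdot\lambda^{1/2}$; more precisely, $\|x/\lambda-y/\lambda\|_1^{1/2}=\lambda^{-1/2}\|x-y\|_1^{1/2}$. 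So the two sides scale identically and \eqref{A} holds with $A=A_0$. The estimate is scale invariant, so this step presents no difficulty.

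\emph{Proof of \eqref{B}.} Theorem~\ref{thm;maz} with $p=2$, $q=1$ gives Lipschitz continuity on the unit ball: $\|\MM_{2,1}(u)-\MM_{2,1}(v)\|_1\le B_0\|u-v\|_2$ whenever $\|u\|_2,\|v\|_2\le1$. Put $\lambda=\max\{\|u\|_2,\|v\|_2\}>0$. Homogeneity of degree $2$ gives
\[
\|\MM_{2,1}(u)-\MM_{2,1}(v)\|_1=\lambda^2\|\MM_{2,1}(u/\lambda)-\MM_{2,1}(v/\lambda)\|_1\le \lambda^2B_0\|u-v\|_2/\lambda=B_0\lambda\|u-v\|_2,
\]
which is exactly \eqref{B}. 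Alternatively, one could prove \eqref{B} directly without Ricard's result. Write
\[
u|u|-v|v|=(u-v)|u|+v(|u|-|v|),
\]
and use H\"older's inequality. The only delicate term is $\||u|-|v|\|_2$. It is bounded by $\sqrt2\,\|u-v\|_2$ by the Powers--St{\o}rmer / Birman--Koplienko--Solomyak type inequality for the absolute value in $L_2$. This gives $B=1+\sqrt2$ as a fallback.

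The main obstacle is making the homogeneity rigorous for $\tau$-measurable operators. One must check that the polar decomposition of $\lambda x$ uses the same partial isometry and that $|\lambda x|^{1/2}=\lambda^{1/2}|x|^{1/2}$. Both follow from the uniqueness of the polar decomposition and the Borel functional calculus. The degenerate case $x=y=0$ (or $u=v=0$) is trivial. Universality of $A$ and $B$ holds because Ricard's constants depend only on $p$ and $q$, not on $\M$.
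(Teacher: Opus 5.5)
Your proof is correct and matches the paper's argument: both rescale by the larger of the two norms and use the homogeneity $\MM_{1,2}(\lambda x)=\lambda^{1/2}\MM_{1,2}(x)$ and $\MM_{2,1}(\lambda u)=\lambda^{2}\MM_{2,1}(u)$ to extend Ricard's unit-ball estimates with the same constants. Your optional direct route to \eqref{B} via $u|u|-v|v|=(u-v)|u|+v(|u|-|v|)$ also works, but the bound $\||u|-|v|\|_{L_2(\M)}\le\sqrt2\,\|u-v\|_{L_2(\M)}$ is the Araki--Yamagami inequality (obtainable from the $2\times2$ off-diagonal trick); Powers--St{\o}rmer would only give a H\"older-type bound here.
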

	
	\begin{proof}
		Ricard's first theorem in \cite{R2015} gives, on the unit ball of $L_{1}(\mathcal M)$,
		\[\|\MM_{1,2}(x)-\MM_{1,2}(y)\|_{L_{2}(\M)}\leq A\|x-y\|_{L_{1}(\M)}^{1/2}.\]
		For arbitrary $x,y$, let $r=\max\{\|x\|_{L_{1}(\M)},\|y\|_{L_{1}(\M)}\}$. When $r=0$, it is trivial. Since $\MM_{1,2}(rx)=r^{1/2}\MM_{1,2}(x)$ for $r\geq 0$, applying the unit-ball estimate to $x/r$ and $y/r$ gives \eqref{A} with the same constant.
		
		Likewise, Theorem \ref{thm;maz} says that $\MM_{2,1}$ is a Lipschitz operation on the unit ball of $L_{2}(\mathcal M)$. If $u=v=0$, \eqref{B} is immediate. For $s =\max\{\|u\|_{L_{2}(\M)},\|v\|_{L_{2}(\M)}\}>0$, we have
		\[\MM_{2,1}(su)=s^2 \MM_{2,1}(u).\]
		Then we can obtain \eqref{B} by
		\begin{align*}
			\|\MM_{2,1}(u)-\MM_{2,1}(v)\|_{L_{1}(\M)}
			&=s^2\left\|\MM_{2,1}\left(\frac{u}{s}\right)-\MM_{2,1}\left(\frac{v}{s}\right)\right\|_{L_{1}(\M)}\\
			&\leq Bs^2\left\|\frac{u-v}{s}\right\|_{L_{2}(\M)}\\
			&=Bs\|u-v\|_{L_{2}(\M)}.
		\end{align*}
	\end{proof} 
	It is worth noting that another important technical point in the proof consists in controlling the oscillation of the random variable by the partial derivatives of its components. Here we give a useful consequence that Cordero-Erausquin and Eskenazis have obtained in \cite{CE2023}:
	\begin{theorem}[Theorem 7, \cite{CE2023}]
		Let $(E,\|\cdot\|_E)$ be a Banach space with martingale type $2$.
		Then there exists $C(E)\in(0,\infty)$ such that for every
		$n\in\mathbb{N}$, every function $f:\cn\to E$ satisfies
		\begin{equation}
			\|f-{\E} f\|_{L_2(\cn;E)}^2 \leq C(E)\sum_{i=1}^n\|\partial_i f\|_{L_2(\log L)^{-1}(\cn;E)}^2.
		\end{equation}
	\end{theorem}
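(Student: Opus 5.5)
The plan is a semigroup argument on $\cn$ that adds hypercontractivity to the martingale type $2$ inequality. Let $P_t=e^{-t\Delta}$ be the heat semigroup on $\cn$, where $\Delta=\sum_{i=1}^n\partial_i$. Since $\partial_i^2=\partial_i$, each $\partial_i$ is a projection, and it commutes with $P_t$. Write
\[
f-\E f=\int_0^\infty \Delta P_t f\,dt=\sum_{i=1}^n\int_0^\infty P_t\partial_i f\,dt .
\]
The target is an estimate of Littlewood--Paley type,
\[
\|f-\E f\|_{L_2(\cn;E)}^2\lesssim_E \sum_{i=1}^n\int_0^\infty e^{-ct}\,\|P_{t}\partial_i f\|_{L_2(\cn;E)}^2\,dt ,
\]
for some $c>0$, possibly after a change of time scale. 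In the scalar case this follows from orthogonality. For $E$ of martingale type $2$ I would derive it as follows. First discretize in dyadic time blocks $[k,k+1]$. Then, inside each block, organize the pieces $P_t\partial_i f$ along the coordinate filtration $\mathcal F_i=\sigma(\varepsilon_1,\dots,\varepsilon_i)$. The function $P_t\partial_i f$ has mean zero in $\varepsilon_i$, so its conditional expectations $\E[\,\cdot\,|\mathcal F_{i}]$ produce martingale differences. Pisier's martingale type $2$ inequality then bounds the square of the sum by $C(E)$ times the sum of squares. The exponential weight $e^{-ct}$ should come from the spectral gap: $\partial_i P_t=e^{-t}P_t\partial_i$ on the relevant Walsh levels, so part of the decay can be kept in reserve.

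The second step is vector-valued hypercontractivity. Since $P_t$ is given by a positive probability kernel, Bonami--Beckner transfers to $E$-valued functions by applying it to the scalar function $\|g\|_E$:
\[
\|P_t g\|_{L_2(\cn;E)}\le \|P_t\|g\|_E\|_{L_2}\le \|g\|_{L_{1+e^{-2t}}(\cn;E)}.
\]
I would write $P_t=P_{t/2}P_{t/2}$, so that one half supplies the decay factor and the other half supplies hypercontractivity. Applying this to $g=\partial_i f$ reduces the problem to a purely scalar Orlicz lemma: for every scalar $h$ on $\cn$,
\[
\int_0^\infty e^{-ct}\,\|h\|_{L_{1+e^{-2t}}}^2\,dt\ \lesssim\ \|h\|_{L_2(\log L)^{-1}}^2 .
\]
To prove this lemma, normalize $\|h\|_{L_2(\log L)^{-1}}=1$ and change variables $p=1+e^{-2t}$. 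It then suffices to show $\int_1^2 \|h\|_p^2\,dp\lesssim1$, which follows by splitting $|h|$ into dyadic level sets and using $\int |h|^2/\log(e+|h|)\le C$. This is the standard computation behind Talagrand's inequality. Summing over $i$ gives the theorem.

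I expect the main obstacle to be the first step, the martingale-type Littlewood--Paley bound. The pieces $P_t\partial_i f$ are not themselves martingale differences, and replacing them by $\E[P_t\partial_i f|\mathcal F_i]-\E[P_t\partial_i f|\mathcal F_{i-1}]$ must not lose the individual norms $\|P_t\partial_i f\|_2$. My first attempt would use the identity $\E[\partial_i g\,|\,\mathcal F_i]=\partial_i\E[g|\mathcal F_i]$ together with contractivity of conditional expectation, applied separately in each time block. I would control the time integral by Cauchy--Schwarz against $e^{-ct}$, accepting a constant depending only on $C(E)$.
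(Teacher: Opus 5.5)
The paper gives no proof of this statement. It quotes it from \cite{CE2023} and uses only the Hilbert-space case, so your outline has to stand on its own.

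\textbf{What works.} The second half of your plan is sound.
\begin{itemize}
\item The transfer $\|P_tg\|_E\le P_t\|g\|_E$ gives vector-valued hypercontractivity.
\item The scalar lemma $\int_1^2\|h\|_p^2\,dp\lesssim\|h\|_{L_2(\log L)^{-1}}^2$ is true by the level-set computation you indicate.
\item The identity $\partial_iP_t=e^{-t}P_t\partial_i$ is false as written, since $\partial_i$ and $P_t$ commute. The decay you want is nevertheless available. Write $\partial_if=\varepsilon_ig_i$ with $g_i$ independent of $\varepsilon_i$. Then $P_t\partial_if=e^{-t}\varepsilon_i\widetilde P_tg_i$, where $\widetilde P_t$ is the heat semigroup in the other coordinates. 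Hence $\|P_t\partial_if\|_{L_2(E)}\le e^{-t}\|\partial_if\|_{L_{1+e^{-2t}}(E)}$, and neither the splitting $P_{t/2}P_{t/2}$ nor a weight $e^{-ct}$ is needed.
\item For Hilbert-valued $f$, your target bound is an identity by orthogonality of the Walsh coefficients. So your outline is complete in the case the paper actually uses.
\end{itemize}

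\textbf{The gap.} It is the step you flagged: the bound $\|f-\E f\|_{L_2(E)}^2\lesssim_E\sum_i\int_0^\infty\|P_t\partial_if\|_{L_2(E)}^2\,dt$. The coordinate filtration $\mathcal F_i$ cannot deliver it, for two reasons.
\begin{itemize}
\item The projections $\E[P_t\partial_if\mid\mathcal F_i]$ are martingale differences. But $\E[\partial_ig\mid\mathcal F_i]=\E[g\mid\mathcal F_i]-\E[g\mid\mathcal F_{i-1}]$, so they sum to $P_tf-\E f$, not to $\Delta P_tf$. After integrating in $t$ you reconstruct $\Delta^{-1}(f-\E f)$ rather than $f-\E f$.
\item The genuine martingale differences of a time block, e.g.\ of $f-P_1f$, are $\E[(I-P_1)\partial_if\mid\mathcal F_i]$. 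Contractivity bounds these only by $\|\partial_if\|_{L_2(E)}$, which is just the Poincar\'e inequality.
\end{itemize}
The logarithmic gain lives at small times, where $1+e^{-2t}\to 2$, and that region lies entirely inside your first unit block. Already for scalars, $\sum_i\|\E[\partial_ig\mid\mathcal F_i]\|_2^2=\|g-\E g\|_2^2$, while $\sum_i\|\partial_ig\|_2^2=\sum_S|S|\hat g(S)^2$. Dropping the conditional expectation therefore costs a factor $|S|$ on level $S$. That is exactly the factor the time integral $\int_0^\infty e^{-2t|S|}\,dt=1/(2|S|)$ has to compensate. Refining the time blocks does not help: each block loses this factor at its own frequency, and blocks cannot be recombined square-summably without a martingale structure in time.

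\textbf{The missing idea.} Run the martingale in the semigroup time. Let $(X_s)$ be the continuous-time random walk on $\cn$ with generator $-\Delta$, each coordinate flipping at rate $1/2$, started from $\sigma_n$. Put $M_s=P_{T-s}f(X_s)$ for $0\le s\le T$. This is an $E$-valued martingale with $M_0=P_Tf(X_0)$ and $M_T=f(X_T)$. Its jump at a flip of coordinate $i$ at time $s$ is $-2\,\partial_iP_{T-s}f(X_{s-})$. Apply martingale type $2$ to a fine time discretization, where two flips in one step are negligible, and use $X_s\sim\sigma_n$. This gives
\[
\E\|f(X_T)-P_Tf(X_0)\|_E^2\le 2C(E)\sum_{i=1}^n\int_0^T\|P_u\partial_if\|_{L_2(E)}^2\,du .
\]
Since $P_Tf\to\E f$ uniformly, letting $T\to\infty$ yields the Littlewood--Paley bound. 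Your hypercontractivity step and scalar lemma then finish the proof.
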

	\begin{theorem}\label{eq:cee-original}
		Let $H$ be an arbitrary Hilbert space. There is a constant $C_{\Xi}<\infty$ depending only on the choice of $\Xi$ in $\|\cdot\|_{L_{\Xi}}$ such that every $f:\cn\to H$ satisfies
		\begin{equation}
			\|f-{\E} f\|_{L_2(\cn;H)}^2 \leq C_{\Xi}\sum_{i=1}^n\|\partial_i f\|_{L_{\Xi}}^2.
		\end{equation}
		where $\Xi$ may be taken to be any Young function equivalent to $ \frac{t^2}{\log(e+t)}$, in particular the function $\Phi$, after changing the universal constant. 
	\end{theorem}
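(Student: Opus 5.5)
The plan is to obtain Theorem \ref{eq:cee-original} directly from Theorem 7 of \cite{CE2023}, and then to replace the $L_2(\log L)^{-1}$ norm by an arbitrary equivalent Luxemburg functional $L_\Xi$.

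\emph{Step 1: reduce to martingale type $2$.} Every Hilbert space $H$ has martingale type $2$ with constant $1$. Indeed, for an $H$-valued martingale $(M_k)$ with differences $d_k$, the increments are orthogonal in $L_2(H)$, so
\[
\E\|M_n\|^2=\|M_0\|^2+\sum_{k}\E\|d_k\|^2 .
\]
This is an identity, so the martingale type constant does not depend on $H$.

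\emph{Step 2: uniformity of $C(H)$.} We need the constant $C(H)$ in Theorem 7 to depend only on the martingale type $2$ constant of $E$, and hence to be universal over Hilbert spaces. I would check this by inspecting the proof in \cite{CE2023}, where $C(E)$ enters only through that constant. Alternatively, one can argue by reduction. The function $f$ takes finitely many values, so it lives in a finite-dimensional subspace $\mathbb{R}^m\subset H$; complex scalars can be handled by realification. Every norm involved depends only on the Hilbert norm. So it suffices to treat $\ell_2^m$, and the constant obtained there is the one for $\ell_2$, independent of $m$. This gives
\[
\|f-\E f\|_{L_2(\cn;H)}^2\le C\sum_i \|\partial_i f\|_{L_2(\log L)^{-1}(\cn;H)}^2
\]
with $C$ absolute.

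\emph{Step 3: pass to $L_\Xi$.} Suppose $\Xi$ is a Young function with $c_1 \frac{t^2}{\log(e+t)}\le \Xi(t)\le c_2\frac{t^2}{\log(e+t)}$. Let $\phi$ be the Young function used for $L_2(\log L)^{-1}$, so that $\phi(t)=\frac{t^2}{\log(e+t)}$ for large $t$. Then $\phi$ and $\Xi$ are equivalent up to constants for large $t$. For $t$ in a compact range both are comparable to $t^2$ up to constants, since a Young function of this growth near $0$ behaves quadratically.

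From $\Xi\le K\phi(Kt)$ and convexity, we get $\|g\|_{L_\Xi}\le K'\|g\|_{L_\phi}$, and symmetrically in the other direction. Here we use that $\phi(t/\lambda)$ integrating to $\le 1$ implies $\Xi(t/(K\lambda))\le K\phi(t/\lambda)$. Convexity with $\Xi(0)=0$ then yields $\Xi(s/a)\le \Xi(s)/a$ for $a\ge1$, which absorbs the factor $K$. This is exactly the content of \eqref{normequvalience1} for $\Xi=\Phi$.

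Substituting into the inequality from Step 2 gives the claim with $C_\Xi=C K'^2$.

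The main obstacle is Step 2: ensuring that the constant from Theorem 7 is uniform over all Hilbert spaces. I would first try to read the dependence off the martingale type constant in the proof of \cite{CE2023}. If that fails, I would use the finite-dimensional reduction to $\ell_2$.
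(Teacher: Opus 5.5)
Your proposal is correct and follows the same route as the paper's justification (the remark after the statement): apply Theorem~7 of \cite{CE2023} to $\ell_2$, which has martingale type $2$, obtain uniformity over all Hilbert spaces by isometrically embedding the finite-dimensional span of the range of $f$ into $\ell_2$, and absorb the switch from $L_2(\log L)^{-1}$ to $L_\Xi$ into a constant depending only on $\Xi$. There is one slip in Step 3: a Young function prescribed only for large $t$ need not be quadratic near $0$, so your pointwise comparison can fail for small $t$; either take $\phi(t)=t^2/\log(e+t)$ itself (it is convex), or use a bound with an additive constant such as $\phi(t)\le K\,\Xi(Kt)+K$, which on the probability space $\cn$ still gives the norm comparison after rescaling by convexity.
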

	\begin{remark}
		The constant can be chosen uniformly over all Hilbert spaces since the image of a map from the discrete cube spans a finite-dimensional Hilbert space, which embeds isometrically into a fixed $\ell_2$, so the constant for $\ell_2$ works for every such map. Here Theorem 3.3 applies to $\ell_2$ because it has martingale type $2$; the comparison with the chosen $L_\Xi$ norm contributes a constant depending only on $\Xi$.
	\end{remark}
	Equipped with all the above established results, we turn to the proof of our main theorem.
	
	\section{The proof of Theorem \ref{main}}\label{thmproof}
	\begin{proof}
		For any $f: \cn \to L_{1}\left(\M \right)$ with $p\geq1$, $f(\varepsilon)$ is a $L_{1}\left(\M \right)$-valued random variable. Supposing that $\delta$ is an independent copy of $\varepsilon$, then $f(\delta)$ and $f(\varepsilon)$ are i.i.d. We write $f(\varepsilon)$ briefly as $\x$ and $f(\delta)$ as $\x'$.
		Now we derive a bound for the left-hand side of \eqref{eqmain} via the Mazur map.
		Denote the centered variable by
		\[\y=\x-\E \x,\quad \y'=\x'-\E \x',\]
		and those under the $\{1,2\}$ Mazur map by
		\[G=\MM_{1,2}(\y), \quad G'=\MM_{1,2}(\y').\]
		Since $\E\y=\E\y'=0$ and centering does not change pairwise differences, by Jensen\'s inequality we have
		\begin{equation}\label{eq y}
			\E_{\y}\left\| \y\right\|_{L_{1}(\M)} = \E_{\y}\left\| \y-\E\y'\right\|_{L_{1}(\M)} \leq \E_{\y,\y'}\left\| \y-\y'\right\|_{L_{1}(\M)}=\E_{\x,\x'}\|\x-\x'\|_{L_{1}(\M)}.
		\end{equation}
		In the following, if two random variables are present, $\E$ denotes expectation over all random variables unless otherwise indicated by subscripts.
		
		Recall that we know \eqref{mazur relations2}, then
		\[\E\|G\|_{L_{2}(\M)}^2=\mathbb E\|\y\|_{L_{1}(\M)}.\]
		Moreover, invoking \eqref{mazur relations1} and \eqref{B}, it holds
		\begin{align*}
			\E\|\x-\x'\|_{L_{1}(\M)} 
			&= \E\|\y-\y'\|_{L_{1}(\M)}\\
			&=\E\|\MM_{2,1}(\MM_{1,2}(\y))-\MM_{2,1}(\MM_{1,2}(\y'))\|_{L_{1}(\M)}\\
			&= \E\|\MM_{2,1}(G)-\MM_{2,1}(G')\|_{L_{1}(\M)}\\
			&\leq B\,\E\left[\max\{\|G\|_{L_{2}(\M)},\|G'\|_{L_{2}(\M)}\}\,\|G-G'\|_{L_{2}(\M)}\right].
		\end{align*}
		The Cauchy–Schwarz inequality allows us to further estimate the above inequality
		\begin{align*}
			\E\|\y-\y'\|_{L_{1}(\M)}
			&\leq B\left(\E\max\{\|G\|_{L_{2}(\M)}^2,\|G'\|_{L_{2}(\M)}^2\}\right)^{1/2} \left(\E \|G-G'\|_{L_{2}(\M)}^2\right) ^{1/2}\\
			&\leq B(2 \E \|G\|_{L_{2}(\M)}^2)^{1/2} \left(\E \|G-G'\|_{L_{2}(\M)}^2\right)^{1/2}\\
			&= B(2 \E \|\y\|_{L_{1}(\M)})^{1/2} \left(\E \|G-G'\|_{L_{2}(\M)}^2\right) ^{1/2}.
		\end{align*}
		Since $L_2(\M)$ is a Hilbert space, the standard independent-copy identity gives
		\[\E \|G-G'\|_{L_{2}(\M)}^2=2 \E\|G-\E G\|_{L_{2}(\M)}^2.\]
		Using all the preceding derivations, \eqref{eq y} yields
		\begin{equation}\label{proof1}
			\begin{aligned}
				\E\|\x-\x'\|_{L_{1}(\M)}
				&\leq 4B^2 \E\|G-\E G\|_{L_{2}(\M)}^2\\
				&=4B^2\E \|\MM_{1,2}(\y(\varepsilon))-\E \MM_{1,2}(\y)\|_{L_{2}(\M)}^2.
			\end{aligned}
		\end{equation}
		
		We next reduce the problem to the sum of the norms of the partial derivatives to proceed with the proof. Theorem \ref{eq:cee-original} gives the desired bound
		\begin{equation}\label{proof2}
			\begin{aligned}
				\E \|\MM_{1,2}(\y)-\E \MM_{1,2}(\y)\|_{L_{2}(\M)}^2
				&\leq C_{\Phi}\sum_{j=1}^n\left\| \partial_j (\MM_{1,2} \circ \y)\right\|_{L_\Phi}^2\\
				&= C_{\Phi}\sum_{j=1}^n\left\|\,\|\partial_j (\MM_{1,2} \circ \y)\|_{L_{2}(\M)}\,\right\|_{L_\Phi}^2
			\end{aligned}
		\end{equation}
		as $L_2(\mathcal{M})$ is a Hilbert space.
		According to the definition of the derivation, 
		\begin{align}
			\| \partial_j \left( \MM_{1,2} \circ \y\right) (\varepsilon)\|_{L_{2}(\M)}
			&=\frac{1}{2}\|\MM_{1,2}(\y(\varepsilon))-\MM_{1,2}(\y(\varepsilon^{(j)}))\|_{L_{2}(\M)}\notag\\
			&\leq\frac A2\|\y(\varepsilon)-\y(\varepsilon^{(j)})\|_{L_{1}(\M)}^{1/2}\notag =\frac A{\sqrt2}\|\partial_j f(\varepsilon)\|_{L_{1}(\M)}^{1/2}.
		\end{align}
		where the first inequality follows from \eqref{A}. 
		By monotonicity and homogeneity of the Orlicz norm, followed by the identity and comparison in \eqref{normequvalience2},
		\begin{equation}\label{proof3}
			\begin{aligned}
				\bigl\|\,\|\partial_j (\MM_{1,2} \circ \y)\|_{L_{2}(\M)}\,\bigr\|_{L_\Phi}^2
				&\leq\frac{A^2}2\bigl\|\|\partial_j f(\varepsilon)\|_{L_{1}(\M)}^{1/2}\bigr\|_{L_\Phi}^2\\
				&\asymp\frac{A^2}2\bigl\|\|\partial_j f(\varepsilon)\|_{L_{1}(\M)}\bigr\|_{\psi_{1,1}}.
			\end{aligned}
		\end{equation}
		Combining \eqref{proof1}, \eqref{proof2} and \eqref{proof3}, we get
		\[\E \left\|f(\varepsilon)-f(\delta)\right\|_{L_{1}(\M)} \lesssim \sum_{j=1}^{n} \bigl\|\|\partial_j f(\varepsilon)\|_{L_{1}(\M)}\bigr\|_{\psi_{1,1}}\]
		Hence we complete the whole proof.
	\end{proof}
	\section*{Declaration on the use of AI}
	
	The authors used OpenAI Codex to assist with literature
	searches, identifying gaps, organizing and typesetting the manuscript, and editing the English. The authors take full responsibility for the content of this paper.


\begin{thebibliography}{10}

\bibitem{CE2023}
D.~Cordero-Erausquin and A.~Eskenazis.
\newblock Talagrand's influence inequality revisited.
\newblock {\em Anal. PDE}, 16(2):571--612, 2023.

\bibitem{Enflo1978}
P.~Enflo.
\newblock On infinite-dimensional topological groups.
\newblock In {\em S\'eminaire sur la {G}\'eom\'etrie des {E}spaces de {B}anach
  (1977--1978)}, pages Exp. No. 10--11, 11. \'Ecole Polytech., Palaiseau, 1978.

\bibitem{Enflo1969}
Per Enflo.
\newblock On the nonexistence of uniform homeomorphisms between
  {$L\sb{p}$}-spaces.
\newblock {\em Ark. Mat.}, 8:103--105, 1969.

\bibitem{HN2013}
Tuomas Hyt\"onen and Assaf Naor.
\newblock Pisier's inequality revisited.
\newblock {\em Studia Math.}, 215(3):221--235, 2013.

\bibitem{IVHV2020}
R.~Ivanisvili, P. an van~Handel and A.~Volberg.
\newblock Rademacher type and {E}nflo type coincide.
\newblock {\em Ann. of Math. (2)}, 192(2):665--678, 2020.

\bibitem{Ka1967}
M.~I. Kadec.
\newblock A proof of the topological equivalence of all separable
  infinite-dimensional {B}anach spaces.
\newblock {\em Funkcional. Anal. i Prilo\v zen.}, 1:61--70, 1967.

\bibitem{KKL1988}
J.~Kahn, G.~Kalai, and N.~Linial.
\newblock The influence of variables on {B}oolean functions.
\newblock In {\em 29th {A}nnual {S}ymposium on {F}oundations of {C}omputer
  {S}cience}, pages 68--80. IEEE Comput. Soc. Press, Washington, DC, [1988]
  \copyright 1988.

\bibitem{Ka2018}
G.~Kalai.
\newblock {B}oolean functions: influence, threshold and noise.
\newblock In {\em European {C}ongress of {M}athematics}, pages 85--110. Eur.
  Math. Soc., Z\"{u}rich, 2018.

\bibitem{Pi1986}
G.~Pisier.
\newblock Probabilistic methods in the geometry of {B}anach spaces.
\newblock In {\em Probability and analysis ({V}arenna, 1985)}, volume 1206 of
  {\em Lecture Notes in Math.}, pages 167--241. Springer, Berlin, 1986.

\bibitem{R2015}
É. Ricard.
\newblock H\"older estimates for the noncommutative {M}azur maps.
\newblock {\em Arch. Math. (Basel)}, 104(1):37--45, 2015.

\bibitem{MU1932}
Mazur S. and Ulam S.
\newblock Sur les transformations isométriques d’espaces vectoriels normés.
\newblock volume Vol.194, page pp.946–948. 1932.

\bibitem{Ta1994}
M.~Talagrand.
\newblock On {R}usso's approximate zero-one law.
\newblock {\em Ann. Probab.}, 22(3):1576--1587, 1994.

\end{thebibliography}
\end{document}